\documentclass[11pt]{amsart}

\usepackage[a4paper,left=2.5cm,right=2.5cm,top=2.5cm,bottom=2.5cm]{geometry}
\usepackage[utf8]{inputenc}
\usepackage{amsfonts}
\usepackage{amssymb}
\usepackage{amsthm}
\usepackage{amsmath}
\usepackage{mathrsfs}
\usepackage{epsfig}
\usepackage{comment}
\usepackage{mathpazo}
\usepackage{tikz}
\usepackage{fp,ifthen}
\usepackage{esint}
\usepackage{nicefrac}
\usetikzlibrary{decorations.pathreplacing}
\usepackage{float}
\usepackage{dsfont}
\usepackage{enumerate}
\usepackage[colorlinks=true,linkcolor=blue,citecolor=blue,urlcolor=blue]{hyperref}
\usepackage[protrusion=true,expansion=true,stretch=30,shrink=30,step=1]{microtype}

\newtheorem{theorem}{Theorem}[section]
\newtheorem{lemma}[theorem]{Lemma}
\newtheorem{rmk}[theorem]{Remark}

\newcommand{\R}{\mathbb R}
\newcommand{\HH}{\mathcal H}

\newcommand{\A}{\mathcal A}
\newcommand{\PP}{\mathcal P}
\DeclareMathOperator{\tr}{tr}
\DeclareMathOperator{\dist}{dist}
\DeclareMathOperator{\Lin}{Lin}
\DeclareMathOperator{\Ker}{Ker}
\DeclareMathOperator{\diver}{div}
\DeclareMathOperator{\rank}{rank}
\DeclareMathOperator{\conv}{co}
\DeclareMathOperator{\diag}{diag}

\newcommand{\restr}{\mathrel{\tikz[baseline=0ex]{\draw[line width=.5pt,line cap=round] (0ex,1ex) -- (0ex,0ex) -- (1ex,0ex);}}}

\newcommand{\dd}{\;\mathrm{d}}

\numberwithin{equation}{section}

\title[Bounds on thin sets for measures satisfying PDEs]{Scale-invariant bounds on thin sets  for measures satisfying a first-order PDE and the anisotropic Michael--Simon inequality}

\author{Guido De Philippis}
\address{Department of Mathematics, University of Padua, Via Trieste 63, 35121 Padova, Italy}
\email{guido.dephilippis@math.unipd.it}

\author{Luca Gennaioli}
\address{Mathematics Institute, University of Warwick, Coventry CV4 7AL, UK}
\email{L.Gennaioli@warwick.ac.uk}

\author{Alessandro Pigati}
\address{Department of Decision Sciences, Bocconi University, Via Roentgen 1, 20136 Milano, Italy}
\email{alessandro.pigati@unibocconi.it}

\author{Filip Rindler}
\address{Mathematics Institute, University of Warwick, Coventry CV4 7AL, UK}
\email{F.Rindler@warwick.ac.uk}

\date{}

\begin{document}

\begin{abstract}
We prove mass estimates for PDE-constrained measures on sets of $\sigma$-finite ${\mathcal H}^m$-measure, assuming an appropriate rank condition. As an application, we derive a Michael--Simon inequality for varifolds with bounded anisotropic first variation with respect to an integrand that satisfies the natural atomic condition (AC1).
\end{abstract}

\maketitle

\section{Introduction}\label{sec:introduction}

The understanding of the interplay between PDE constraints and oscillation/concentration of (sequences of) functions is a central topic in PDE and the Calculus of Variations and dates back to the work of Tartar on compensated compactness \cite{Tar79}.  In the last decade there has been a renewed interest in it,
especially in the form of the singular part of PDE-constrained measures and in the behavior of sequences of functions. In particular, in the work~\cite{DR16} the first and fourth named  authors showed that the singular  part of a measure satisfying a PDE constraint is forced to lie in the \emph{wave cone} associated with that PDE operator. More precisely, specializing to the case of first-order operators,  it was shown in~\cite[Theorem 1.1]{DR16} that if \(V,W\) are  finite-dimensional vector spaces, \(\mathcal{A}: C^\infty(\mathbb{R}^d;V)\to C^\infty(\mathbb{R}^d;W)\) is a linear  differential operator,
\[
 \mathcal{A}:=\sum_{j=1}^d \A_j\partial_j, \quad \A_j\in \Lin(V,W),
\]
and  \(\sigma\in \mathcal{M}(\mathbb{R}^d;V)\) is a Radon measure such that
\[
 \mathcal{A} \sigma=\nu \in \mathcal{M}(\mathbb{R}^d;W),
\]
and \(E\) is such that \(\mathcal{L}^d(E)=0\), then
\[
 \frac{\mathrm{d} \sigma}{\mathrm{d} |\sigma|}(x)\in \Lambda_{\mathcal{A}}:=\bigcup_{|\xi|=1}\Ker \mathbb{A}(\xi),\quad   \mathbb{A}(\xi):=\sum_{j=1}^d \xi_j \A_j \in \Lin(V,W)
\]
for \(|\sigma|\)-a.e.\ \(x\in E\).   This  work was later extended in~\cite{ADHR19}, where it was shown that further restrictions are imposed on the polar \(\frac{\mathrm{d} \sigma}{\mathrm{d} |\sigma|}(x)\) on the part of \(\sigma\) which is singular with respect to lower-dimensional Hausdorff measures: If \(E\) is such that \(\mathcal{H}^m (E)=0\), then
\[
 \frac{\mathrm{d} \sigma}{\mathrm{d} |\sigma|}(x)\in \Lambda^m_{\mathcal{A}}:=\bigcap_{\pi \text{ \(m\)-plane}}\bigcup_{\xi\in\pi \setminus \{0\}}\Ker \mathbb{A}(\xi).
\]
In fact, in this statement one could replace the \(m\)-dimensional Hausdorff measure with the \(m\)-dimensional integral-geometric measure. In particular, by combining this with the Besicovitch--Federer projection theorem, one can recover various rectifiability results: see the surveys~\cite{DePRin18,DeP18} and the lecture notes~\cite{DeP21}. These ideas were also behind the main result in~\cite{DDG18}, where the authors identified a necessary and sufficient condition on the anisotropic integrand for the validity of  Allard's rectifiability theorem for  varifolds with controlled anisotropic first variation: see Section~\ref{sec:anis-integr-mich} below for precise statements.

Besides these qualitative results, it is a natural question whether they can be quantified. This was done in a simple perturbative regime in~\cite{ArrDePHirRinSko24}; see also~\cite{GueRaiSch24} and the recent work~\cite{BatWei25}. Apart from its intrinsic interest, a quantitative  enhancement of the results in~\cite{DR16} and~\cite{ADHR19} would have implications in recovering a-priori bounds on concentration measures for various PDEs. In particular, it was clear that these improvements would be a key step in proving a Michael--Simon-type estimate for varifolds with controlled first variation with respect to an anisotropic energy. This connection was used in~\cite{DP26} by the first and third named authors, building upon some unpublished ideas of Almgren: there, the validity of a Michael--Simon-type inequality for \(2\)-dimensional varifolds in \(\mathbb{R} ^3\) was established under the additional assumption that the anisotropic energy is sufficiently close to the area. The same strategy was later extended to \(2\)-dimensional varifolds in any codimension by Firester and Tsiamis in~\cite{FT26b}.

Recently, the second and fourth named authors proved (a generalized version of)  Bouchitt\'e's Vanishing Mass Conjecture~\cite{GR26}. Among the several  consequences of the main result in~\cite{GR26} is a quantitative improvement of the results in~\cite{DR16}, namely~\cite[Theorem~1.9]{GR26}. Soon after the appearance of~\cite{GR26}, Firester and  Tsiamis applied the main result of~\cite{GR26} to prove a Michael--Simon inequality for stationary varifolds in various situations, covering in particular all convex anisotropies in codimension one~\cite{FT26a}.

The goal of this work is to provide a quantitative bound for measures that satisfy a first-order PDE constraint, based on the techniques introduced in~\cite{GR26}. It can be thought of as a first step towards a quantification of the result in~\cite{ADHR19}. Furthermore, we show how this result implies the Michael--Simon inequality for varifolds with bounded first variation  with respect to anisotropic   energies satisfying a suitable (weak) atomic condition in all dimensions and codimensions.

\subsection*{Disclosure on AI usage.} An initial proof of Theorem~\ref{thm:support} was provided to the authors by a commercial AI model when asked if the techniques in \cite{GR26} could have been used to prove a sequential version of the result in~\cite{ADHR19} (in the spirit of the Vanishing Mass Conjecture). The model could not obtain it but, when asked to show Theorem \ref{thm:support}, it provided an essentially complete proof, by combining one of the main new analytic ingredients of~\cite{GR26}, namely~\cite[Lemma 3.2]{GR26}, with a log-Sobolev-type inequality. We later improved the result  by also showing higher integrability  of the density. For completeness, we report in~\cite{AI26} the draft provided by the AI model.  All the proofs in the paper   have been checked by the authors and they take full responsibility for them. AI was also used in the final stage to check for typos.

\subsection{Setting and main results}\label{sec:setting-main-results}

We first state our main result for the PDE operator $\mathcal{A}= \diver$, since it is the case which directly applies to the setting of varifolds with bounded anisotropic first variation.

\begin{theorem}\label{thm:support}
Let $0<m\leq d\le D$ be integers and let $\mathcal K\subset\R^{D\times d}$ be a (nonempty) compact
and convex set, with
\[
\lambda:=\min_{X\in\mathcal K}\sigma_m(X)>0,
\quad \Lambda:=\max_{X\in\mathcal K} \sigma_1(X) < \infty,
\]
where $\sigma_1(X)\ge\dots\ge\sigma_d(X)\ge0$ denote the singular values of $X$.
Let $M\subset\R^d$ be a Borel set with
$\HH^m(M)<\infty$ and let $\mu=\theta\HH^m\restr M$ be a finite
positive measure on $\R^d$. Suppose that $\sigma:=X\mu$, where $X:M\to\mathcal K$, and that $\diver \sigma$ (the row-wise divergence) is a finite measure. Then
\begin{equation}\label{eq:theorem}
\mu(\R^d)\le C(d,\lambda,\Lambda)\HH^m(M)^{1/m}|\diver \sigma|(\R^d).
\end{equation}
\end{theorem}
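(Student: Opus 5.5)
The plan is to dualize \eqref{eq:theorem} and then build a bounded test field. First, normalize. Under the dilation $x\mapsto rx$, $\mu(\R^d)$ and $\HH^m(M)$ both scale like $r^m$, while $|\diver\sigma|(\R^d)$ scales like $r^{m-1}$. Hence \eqref{eq:theorem} is scale invariant, and we may assume $\HH^m(M)=1$. It then suffices to show $\mu(\R^d)\le C\,|\diver\sigma|(\R^d)$.

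The distributional identity gives, for every $\varphi\in C^1_c(\R^d;\R^D)$,
\[
\int X:D\varphi\dd\mu=-\int\varphi\cdot\dd(\diver\sigma).
\]
So the goal is a field $\varphi$ with $\|\varphi\|_\infty\le C(d,\lambda,\Lambda)$ and $X(x):D\varphi(x)\ge 1-\varepsilon$ for $\mu$-most $x$. Since $X$ is only known to take values in $\mathcal K$, I would first use convexity and compactness of $\mathcal K$ together with a minimax argument. This replaces the pointwise unknown $X(x)$ by a uniform requirement: $\min_{Y\in\mathcal K}Y:D\varphi\ge1$ on a set carrying most of $\mu$. The rank condition $\sigma_m(Y)\ge\lambda$ for all $Y\in\mathcal K$ is what makes this attainable. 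For each $Y$ there is an $m$-dimensional subspace of $\R^d$ on which $Y^T$ is uniformly injective, so $Y$ "sees" $m$ independent directions. An $m$-dimensional set therefore cannot hide $\mu$ from the divergence.

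The construction of $\varphi$ is where I would invoke \cite[Lemma 3.2]{GR26}, a quantitative statement for measures whose polar stays in a compact convex set away from the wave cone. I would use it to produce, at each scale, a potential whose gradient pairs positively with every $Y\in\mathcal K$ on a ball, with a loss controlled by the ratio between $\mu$ and the ball size. I would then sum these potentials over dyadic scales. The naive sum diverges logarithmically in the ratio $\mu(B_r(x))/r^m$, i.e.\ in the density $\theta$, which is not bounded a priori. This is where a log-Sobolev-type (entropy) inequality enters. Using $\HH^m(M)=1$, Jensen and the duality $\int\theta\log\theta\,\dd\HH^m\restr M$ versus exponential integrability of the potential, the logarithmic loss is absorbed. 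This yields
\[
\mu(\R^d)\le C\bigl(|\diver\sigma|(\R^d)+\mu(\R^d)^{1-1/m}\cdots\bigr),
\]
which closes after rescaling. A byproduct I would record is higher integrability of $\theta$.

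The main obstacle is this last step. One must show that the multiscale potential has $\|\varphi\|_\infty$ bounded purely in terms of $\HH^m(M)$ rather than in terms of $\|\theta\|_\infty$. In other words, the entropy estimate has to beat the logarithmic divergence coming from scales where $M$ concentrates. I would first try a layer-cake decomposition of $\theta$ into dyadic levels $\{2^k\le\theta<2^{k+1}\}$. The bound would be applied level by level, using that $\HH^m(\{\theta\ge 2^k\})\le 2^{-k}\mu(\R^d)$, and then summed geometrically.
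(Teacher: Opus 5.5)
\textbf{Verdict: there is a genuine gap.} The reduction is sound, but the construction that carries the whole theorem is missing.

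\textbf{The reduction is fine.} For fixed $\mu$, passing to a dual field is in fact an equivalence. The $\mathcal K$-valued $X\in L^\infty(\mu)$ form a weak-$*$ compact convex set, and $\int X:D\varphi\dd\mu$ is bilinear. So by Sion's minimax theorem the inequality holds for all admissible $X$ if and only if there is a $\varphi$ with $\|\varphi\|_\infty\le C\HH^m(M)^{1/m}$ and $\int\min_{Y\in\mathcal K}Y:D\varphi\dd\mu\ge\mu(\R^d)$, up to an arbitrarily small loss. Note that it is this integral, negative part included, that must be controlled; positivity on most of $\mu$ is not enough.

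\textbf{The missing step.} Because the reduction is an equivalence, the entire content of the theorem is the construction of $\varphi$, and your proposal does not supply it.
\begin{itemize}
\item The cited tool cannot do this. \cite[Lemma~3.2]{GR26} is a primal energy estimate along the heat flow. In this paper it becomes Lemma~\ref{lem:energy}, a bound on $\int_0^T\int\rho(|\nabla B|^2+|B\nabla\log\rho|^2)$ with $B_t=\PP_{T-t}\sigma/S_t$. It produces no potential with a pointwise sign against $\mathcal K$.
\item Building potentials scale by scale hits a concrete obstruction. Since $0\notin\mathcal K$, a field $A_0(x-x_0)\chi$ with $\min_{Y\in\mathcal K}Y:A_0>0$ exists. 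But in the cut-off region $D\varphi$ contains $A_0(x-x_0)\otimes\nabla\chi$, which is of order one and has no sign against $\mathcal K$.
\item For the area integrand, radial fields $\gamma(|x|)x$ have a computable worst case over all $m$-planes, and that is exactly what produces the monotonicity formula. For a general $\mathcal K$ with only $\sigma_m\ge\lambda$ there is no analogue; this is the failure of monotonicity in the anisotropic setting.
\item Consequently there is no reason the dyadic sum loses only a logarithm of the density that an entropy inequality could absorb. Nothing in the sketch explains how $\HH^m(M)^{1/m}$, rather than the diameter of the support, would bound $\|\varphi\|_\infty$; the diameter bound already follows from the trivial field $A_0x$.
\item The layer-cake step has the same defect: the pieces $\mu\restr\{2^k\le\theta<2^{k+1}\}$ do not decouple. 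On the primal side, $\diver(X\mu\restr\{2^k\le\theta<2^{k+1}\})$ is in general not a measure. On the dual side, a field built for one level can pair negatively with $\mathcal K$ on the other levels.
\end{itemize}

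\textbf{What the paper does instead.} It never constructs a test field.
\begin{enumerate}
\item It restricts only the \emph{weight}: $\mu_0=\mu\restr E$, with $E\subseteq M$ a finite union of compact Lipschitz-graph pieces carrying at least half of $\mu$. It evolves $\rho_t=\PP_{T-t}\mu_0$, while $B_t$ and $S_t=U_t+\sqrt{T-t}\,R_t$ are built from the full $\sigma$. Thus the PDE survives as $\diver B+B\zeta=\eta$, and Lemma~\ref{lem:energy} applies.
\item The rank condition enters through $\Pi=\chi(B^TB)^2$, which has $\tr\Pi\ge m$ wherever $U_t\ge\sqrt{T-t}\,R_t$. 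By integration by parts, $-\int\rho\tr(\Pi\nabla^2\log\rho)$ is controlled by $\int\rho(|\nabla B|^2+|B\nabla\log\rho|^2)$.
\item The remaining $d-m$ directions are handled by log-semiconvexity, $\nabla^2\log\rho_t\ge-I_d/(2(T-t))$.
\item Hence the renormalized entropy $H(\rho_t)+\frac{(d-m)\gamma}{2}\log(T-t)$ increases by at most $C\gamma(1+\log((1+\sqrt T\,\delta)/\gamma))$ on $[0,T)$.
\item At $t=0$ this entropy is at most $-\frac{m\gamma}{2}\log T$.
\item As $t\to T$ it is at least $\gamma\log(\gamma/\HH^m(E))-C\gamma$. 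This follows by comparing with $e^{-\dist(x,E)^2/(4s)}$ and using $|B_r(E)|\le Cr^{d-m}\HH^m(E)$ for small $r$.
\item Taking $T=\delta^{-2}$ gives the theorem.
\end{enumerate}
This entropy balance is the idea your proposal is missing. In it, the PDE (through the dissipation bound) and the $m$-dimensional size of the support (through the final lower bound) meet in a single quantity that is monotone up to an error.
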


\begin{rmk}
Thanks to~\cite[Proposition~3.1]{ADHR19}, the measure $\mu$ and hence the set $M$ (up to replacing it with $M\cap\{\theta>0\}$) are automatically $m$-rectifiable.
\end{rmk}

Since, with the notation of the introduction, we have
\[
  \Lambda^m_{\diver}=\bigl\{X\in\R^{D\times d}:\rank X<m \bigr\}=\{X: \sigma_m(X)=0\},
\]
the above result can be thought of as a quantitative enhancement of~\cite{ADHR19}. Note also that the  condition that the matrix field $X$ is valued in $\mathcal{K}$, with constants $0 < \lambda \leq \Lambda < \infty$, is an ellipticity condition and enforces that the polar (and any of its averages) of the measure $\sigma$ lies in a set that has positive distance from the $m$-dimensional wave cone. Note that the convexity of $\mathcal{K}$ is as essential here as it is for~\cite[Theorem~1.9]{GR26}, because it entails that averages retain the same constraint.

The following generalization to arbitrary first-order operators $\A$ also holds and, in fact, it is easily seen to be equivalent to the previous statement.

\begin{theorem}\label{thm:support.bis}
Let $0<m\leq d$ be integers and consider a (nonempty) compact
and convex set $\mathcal K\subset V$, as well as a first-order operator $\A=\sum_{j=1}^d \A_j\partial_j$, where
$\A_j\in\Lin(V,W)$ are linear maps between Euclidean spaces. 
For $T_{\kappa}:\R^d\to W$ given by
\[
  T_{\kappa}(\xi):=\sum_j\xi_j\A_j\kappa, \quad \kappa\in\mathcal K,
\]
assume that
\[
\lambda:=\min_{\kappa\in\mathcal K}\sigma_m(T_\kappa)>0,
\quad \Lambda:=\max_{\kappa\in\mathcal K} \sigma_1(T_\kappa) < \infty.
\]
Letting $M\subset\R^d$ and $\mu$ be as above, and $X:M\to\mathcal K$, $\sigma:=X\mu$, suppose that $\A \sigma$ is a finite measure. Then
\begin{equation}\label{eq:theorem.bis}
\mu(\R^d)\le C(\A,d,\lambda,\Lambda)\HH^m(M)^{1/m}|\A \sigma|(\R^d).
\end{equation}
\end{theorem}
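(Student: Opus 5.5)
We derive Theorem~\ref{thm:support.bis} from Theorem~\ref{thm:support} by a linear change of variables on the target. The paper itself says the two statements are equivalent.

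\textbf{The matrix field.} To the measure $\sigma=X\mu$, with $X:M\to\mathcal K\subset V$, attach the matrix-valued measure
\[
\tilde\sigma:=Y\mu,\qquad Y(x):=\bigl(\A_1X(x)\,|\,\dots\,|\,\A_dX(x)\bigr)\in \Lin(\R^d,W)\cong\R^{D\times d},
\]
where $D=\dim W$. Equivalently, $Y(x)\xi=\sum_j\xi_j\A_jX(x)=T_{X(x)}(\xi)$. Then $Y(x)$ ranges in the set
\[
\tilde{\mathcal K}:=\{T_\kappa:\kappa\in\mathcal K\}.
\]
This set is compact and convex, since it is the image of $\mathcal K$ under the linear map $\kappa\mapsto T_\kappa$. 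By assumption,
\[
\min_{\tilde{\mathcal K}}\sigma_m=\lambda>0,\qquad \max_{\tilde{\mathcal K}}\sigma_1=\Lambda.
\]

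\textbf{The divergence.} The row-wise divergence of $\tilde\sigma$ is $\sum_j\partial_j(\A_j\sigma)=\A\sigma$ in the sense of distributions. Hence $|\diver\tilde\sigma|(\R^d)=|\A\sigma|(\R^d)$, with equality of norms if the Euclidean norm on $W$ is used for the rows.

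\textbf{The dimension condition.} Theorem~\ref{thm:support} requires $d\le D$. If $D<d$, then $\sigma_m(T_\kappa)>0$ forces $m\le\operatorname{rank}T_\kappa\le D$. In that case we embed $W$ isometrically into $W\oplus\R^{d-D}$, padding $Y$ with zero rows. This changes neither the singular values nor the divergence, so we may assume $d\le D$.

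\textbf{Conclusion.} Applying Theorem~\ref{thm:support} to $\tilde\sigma=Y\mu$ with the set $\tilde{\mathcal K}$ gives
\[
\mu(\R^d)\le C\,\HH^m(M)^{1/m}|\A\sigma|(\R^d).
\]
The constant depends on $d,\lambda,\Lambda$, and through $D=\dim W$ and the norm conventions on $\A$. This is exactly \eqref{eq:theorem.bis}.

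\textbf{Main obstacle.} The only delicate point is bookkeeping: whether the constant in Theorem~\ref{thm:support} depends on $D$, and the matching of norms on $W$ versus the Frobenius norm on rows. Any discrepancy costs at most a dimensional factor, which is absorbed into $C(\A,d,\lambda,\Lambda)$. All analytic content is inherited from Theorem~\ref{thm:support}.
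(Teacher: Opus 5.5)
Your proposal is correct and is essentially the paper's own argument: the paper likewise sets $\mathrm{d}\tilde\sigma(x):=T_{X(x)}\,\mathrm{d}\mu(x)$, which is the matrix-valued measure with columns $\A_j\sigma$, notes that $\diver\tilde\sigma=\A\sigma$, and applies Theorem~\ref{thm:support} on the compact convex image $\{T_\kappa:\kappa\in\mathcal K\}$. Your zero-row padding for the case $\dim W<d$ is a harmless bookkeeping step that the paper leaves implicit.
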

To see that Theorem~\ref{thm:support.bis} follows from Theorem~\ref{thm:support}, identify $W=\R^{D}$ and let $\mathrm{d}\tilde\sigma(x):=T_{X(x)}\dd\mu(x)$; equivalently, $\tilde\sigma$ is the matrix-valued measure with columns $\A_j \sigma$. Since $\diver\tilde\sigma=\A \sigma$, one concludes that the statement of Theorem~\ref{thm:support} in this case is Theorem~\ref{thm:support.bis}.


The above results can be improved to  deduce a higher integrability property for the density of $\mu$, namely the following.

\begin{theorem}\label{Lp}
    Assume that $\mu$ is a finite positive measure on $\R^d$ and that $\sigma:=X\mu$, where $X:\R^d\to\mathcal{K}$ is as in Theorem~\ref{thm:support.bis}. There exist $p\in(1,\infty)$ and $C>0$,
    depending on $d,\lambda,\Lambda$, such that the following holds:
    If $\A \sigma$ is a finite measure and $M$ is a Borel set with $\sigma$-finite $\HH^m$-measure then, writing $\mu\restr M=\theta\HH^m\restr M$, we have
    \[\|\theta\|_{L^{p,\infty}(\HH^m\restr M)}\le\frac{C}{r^{m(p-1)/p}}\bigl[\mu(\R^d)+r|\A \sigma|(\R^d)\bigr]^{1/p}\mu(\R^d)^{1-1/p}\]
    for all $r>0$.
\end{theorem}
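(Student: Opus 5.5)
The plan is to deduce the weak-type bound from a localized, scale-invariant version of Theorem~\ref{thm:support.bis}, applied to superlevel sets of the density. The key steps are: localize Theorem~\ref{thm:support.bis} by a cutoff at scale $r$; apply it to each superlevel set $\{\theta>t\}$ to get a reverse-Hölder-type inequality; and combine this with the trivial Chebyshev bound.

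\textbf{Reduction to finite measure.} By $\sigma$-finiteness and monotone convergence it suffices to treat Borel $M'\subset M$ with $\HH^m(M')<\infty$. For such $M'$ we aim to show, for each $t>0$,
\begin{equation*}
t\,\HH^m(\{\theta>t\}\cap M')\le t^{-(p-1)}\cdot\frac{C^p}{r^{m(p-1)}}\bigl[\mu(\R^d)+r|\A\sigma|(\R^d)\bigr]\mu(\R^d)^{p-1}.
\end{equation*}
Fix $t$ and set $E:=\{\theta>t\}\cap M'$, so that $\HH^m(E)\le \mu(E)/t\le \mu(\R^d)/t$.

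\textbf{Localization.} Cover $\R^d$ by cubes $Q_i$ of side $r$ and take a partition of unity $\chi_i$ subordinate to the doubled cubes $2Q_i$, with $|\nabla\chi_i|\lesssim 1/r$. Apply Theorem~\ref{thm:support.bis} to $\chi_i\sigma\restr E$. This does not work as stated, because restricting to $E$ destroys the PDE. Instead I would apply it to $\chi_i\mu$ with the thin set $M'\cap 2Q_i$. The obstacle is that the theorem requires the whole measure to be of the form $\theta\HH^m\restr M$, while $\mu$ may also carry a diffuse part.

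This is the main difficulty, and the natural remedy is the key lemma behind Theorem~\ref{thm:support} (via \cite[Lemma 3.2]{GR26}), which I expect the proof of Theorem~\ref{thm:support} to produce in localized form. I would reprove that inequality with the set $E$ entering only through $\HH^m(E)^{1/m}$, giving
\begin{equation*}
\mu(E\cap Q)\le C\,\HH^m(E\cap 2Q)^{1/m}\bigl(|\A\sigma|(2Q)+r^{-1}\mu(2Q)\bigr),
\end{equation*}
in analogy with how \cite[Theorem 1.9]{GR26} bounds the mass on Lebesgue-null or small sets. Heuristically, the test potential built in that proof depends only on the thin set and its $\HH^m$-measure, and the cutoff contributes the $r^{-1}\mu$ term.

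\textbf{Summation and absorption.} Since $\HH^m(E\cap 2Q_i)\le \HH^m(E)$ and the doubled cubes have bounded overlap, summing over $i$ gives
\begin{equation*}
t\,\HH^m(E)\le\mu(E)\le C\,\HH^m(E)^{1/m}\bigl(|\A\sigma|(\R^d)+r^{-1}\mu(\R^d)\bigr).
\end{equation*}
Combined with $\HH^m(E)\le\mu(\R^d)/t$, this yields $\HH^m(E)^{1-1/m}\le C t^{-1}r^{-1}B$, where $B:=\mu(\R^d)+r|\A\sigma|(\R^d)$. For $m\ge2$ this gives decay $\HH^m(E)\lesssim (B/(rt))^{m/(m-1)}$. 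Interpolating this with the Chebyshev bound $\mu(\R^d)/t$ (taking a geometric mean with suitable weights) gives $t^p\HH^m(E)\lesssim r^{-m(p-1)}B\,\mu(\R^d)^{p-1}$ for $p$ slightly above $1$. The exponents should be matched exactly by scaling. In the case $m=1$ the absorption degenerates; there I would instead exploit the slack that $\lambda>0$ provides, or run a log-Sobolev/iteration argument as in the AI-proof discussion, to gain a small power. This choice of interpolation exponent, together with the localized key lemma, is where I expect the real work to lie.
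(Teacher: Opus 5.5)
There is a genuine gap at the central step. The localized inequality you propose to ``reprove'',
\[
\mu(E\cap Q)\le C\,\HH^m(E\cap 2Q)^{1/m}\bigl(|\A\sigma|(2Q)+r^{-1}\mu(2Q)\bigr)\qquad\text{for arbitrary Borel }E\subseteq M,
\]
is not something the method gives, and it is far stronger than the statement. Sum over cubes and let $r\to\infty$: you get $\mu(E)\le C\HH^m(E)^{1/m}|\A\sigma|(\R^d)$ for every $E\subseteq M$. With $E=\{\theta>t\}$ this is the endpoint bound $\|\theta\|_{L^{m/(m-1),\infty}}\le C|\A\sigma|(\R^d)$, which the paper explicitly leaves open. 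You in fact derive this in your absorption step and then interpolate down to a worse $p$, which should be a warning sign.

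Your heuristic that the estimate ``depends only on the thin set and its $\HH^m$-measure'' is where it breaks. In the heat-flow argument, the PDE is used only through the full measure (via $S_t$ and $B_t=V_t/S_t$). The restricted measure $\mu_0=\mu\restr E$ enters only through $\rho_t=\PP_{T-t}\mu_0$. The dissipation bounds of Lemmas~\ref{lem:energy} and~\ref{lem:dimensional} therefore carry a cost $C\gamma\log\bigl((1+\sqrt T\,\delta)/\gamma\bigr)$, with $\gamma=\mu(E)/\mu(\R^d)$, coming from $J(0)\ge\gamma\log\bigl(\gamma/(1+\sqrt T\,\delta)\bigr)$. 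The exponent $1/m$ only survives when $\gamma\gtrsim 1$, which is why Theorem~\ref{thm:support} is proved with $\mu(E)\ge\mu(\R^d)/2$. Superlevel sets at large heights carry small mass, so this is of no use for them. Cutoffs at scale $r$ do not address this either. The obstruction is not the diffuse part of $\mu$ (the heat estimates never require $\mu$ to live on $M$), but the restriction to a small-mass subset.

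The missing idea is to accept this logarithmic loss rather than try to avoid it. Inequality~\eqref{final.bd} with $T=r^2$ holds for every finite union $E\subseteq M$ of compact Lipschitz-graph pieces. Normalizing $\mu(\R^d)=1$ and keeping the $\log(1/\gamma)$ term, it rearranges to
\[
\mu(E)\le C(1+r\delta)\Bigl(\frac{\HH^m(E)}{r^m}\Bigr)^{\varepsilon},\qquad \varepsilon=\varepsilon(d,\lambda,\Lambda)>0\ \text{small},
\]
and this extends by approximation to all $E\subseteq M$ with $\HH^m(E)<\infty$. Take $E_u=\{\theta>u\}$ and use only Chebyshev, $\HH^m(E_u)\le u^{-1}$. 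This gives $u\,\HH^m(E_u)\le\mu(E_u)\le C(1+r\delta)u^{-\varepsilon}r^{-m\varepsilon}$, i.e.\ the claim with $p=1+\varepsilon$. No localization, no interpolation and no separate treatment of $m=1$ are needed. The smallness of $p-1$ is exactly the price of the $\gamma\log(1/\gamma)$ term.
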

\begin{rmk}
    By a standard layer-cake argument we also get uniform bounds on $\|\theta\|_{L^q(\HH^m\restr M)}$ for all $q<p$. Whether the $L^{p,\infty}$ bounds extend up to the endpoint $p=m/(m-1)$ is left as an open question.
\end{rmk}

\subsection{Anisotropic integrands and Michael--Simon inequality}\label{sec:anis-integr-mich}

As an application of Theorem~\ref{thm:support} we will extend the  Michael--Simon inequality in~\cite{MS73} to  varifolds with bounded anisotropic first variation. Let \(\mathrm{Gr}_m (\mathbb{R}^d)\) be the Grassmannian of \(m\)-planes in \(\mathbb{R} ^d\) and consider  an autonomous integrand $F:\mathrm{Gr}_m(\R^d)\to(0,\infty)$ of class $\mathrm{C}^1$. Define
\begin{equation}
    \label{eq:anis_area}
    \mathcal{F}(V):=\int_{\R^d} F(T_x V) \dd |V|(x),
\end{equation}
where $V$ is an $m$-rectifiable varifold and $T_xV$ is its tangent $m$-plane at $|V|$-a.e.\ point $x$.
The first variation of $\mathcal F$ is given by
\[
\langle\delta^FV,Y\rangle:=\int B_F(T_xV):DY(x) \dd |V|(x)
\]
for any vector field $Y\in \mathrm{C}_c^1(\R^d;\R^d)$, where $B_F:\mathrm{Gr}_m(\R^d)\to\R^{d\times d}$ is explicitly computable in terms of $F$~\cite[Lemma~A.2]{DDG18}. Note that, in the isotropic case of the usual area, where $F\equiv1$, $B_F(P)$ is simply the orthogonal projection onto $P$. In general, $B_F(P)$ has rank $m$ and its transpose, $B_F(P)^T$,  takes values in $P$ (while $\Ker B_F(P)=P^\perp$), but it need not be symmetric or positive semidefinite.

An energy \(F\) is said to satisfy the  \emph{atomic condition (AC)} if
\begin{itemize}
\item[(i)] the convex hull $\mathcal K_F:=\conv(\{B_F(P)\mid P\in\mathrm{Gr}_m(\R^d)\})$ consists of matrices of rank at least $m$;
\item[(ii)] non-extremal points of $\mathcal K_F$ have rank $>m$.
\end{itemize}
By itself, Part~(i) is referred to as the \emph{(AC1) condition}. The atomic condition was introduced in~\cite{DDG18} as a very natural ellipticity condition, in that the analogue of Allard's rectifiability criterion for varifolds (i.e., that varifolds $V$ with bounded anisotropic first variation are rectifiable on the set $\{x:\Theta^{m,*}(|V|,x)>0\}$) holds if and only if (AC) is in place (see~\cite[Theorem~1.2]{DDG18} and~\cite[Proposition~3.1]{ADHR19} for the improvement to positive upper density). We also note that in codimension \(1\) (AC1)  is equivalent to convexity of the integrand (once one identifies \(\mathrm{Gr}_{d-1}(\mathbb{R}^d)\) with \(\mathbb{S}^{d-1}/\{\pm1\}\) and extends the resulting even function $F:\mathbb{S}^{d-1}\to(0,\infty)$ one-homogeneously).

We then have the following result.

\begin{theorem}\label{ms}
Let $F$ be an anisotropic integrand that satisfies (AC1).
Then the Michael--Simon inequality
\begin{equation}\label{eq:piersilvio}
|V|(\R^d)\le C(F)\mathcal{H}^m(\{\Theta_V\ge1\})^{1/m}|\delta^FV|(\R^d)
\end{equation}
holds for all rectifiable varifolds $V$ with density $\Theta_V\ge1$ $|V|$-a.e.\ (with both $|V|$ and $|\delta^FV|$ finite measures). As a consequence, for $m>1$ we have
\begin{equation}
\label{eq:gianfranco}
|V|(\R^d)^{(m-1)/m}\le C(F)|\delta^FV|(\R^d).
\end{equation}
\end{theorem}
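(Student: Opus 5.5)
We plan to deduce Theorem~\ref{ms} directly from Theorem~\ref{thm:support}, applied with $D=d$ and $\mathcal K:=\mathcal K_F$. Since $V$ is rectifiable, we write $|V|=\Theta_V\HH^m\restr M$ with $M:=\{\Theta_V\ge1\}$. We may assume $\HH^m(M)<\infty$, otherwise \eqref{eq:piersilvio} is trivial. Because $\Theta_V\ge1$ $|V|$-a.e., we have $\HH^m(M)\le|V|(\R^d)<\infty$ as well. We then set $\mu:=|V|$, $\theta:=\Theta_V$ and $X(x):=B_F(T_xV)^T$ (or $B_F(T_xV)$, depending on the convention for row-wise divergence), so that $X$ takes values in $\mathcal K_F$ or in its transpose.

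\textbf{Hypotheses of Theorem~\ref{thm:support}.} First, $\mathcal K_F$ is the convex hull of the continuous image of the compact Grassmannian, because $F\in \mathrm C^1$ makes $B_F$ continuous. Hence $\mathcal K_F$ is compact and convex, and so is its transpose. Second, (AC1) says every element of $\mathcal K_F$ has rank at least $m$, that is, $\sigma_m>0$ on $\mathcal K_F$. Since $\sigma_m$ is continuous and $\mathcal K_F$ is compact, $\lambda=\min\sigma_m>0$. Transposition preserves singular values, and $\Lambda<\infty$ is clear, so both $\lambda$ and $\Lambda$ depend only on $F$. Third, for $Y\in \mathrm C^1_c$,
\[
\langle\delta^FV,Y\rangle=\int B_F(T_xV):DY\dd|V|=\sum_{i,j}\int (B_F)_{ij}\partial_jY_i\dd\mu .
\]
This is exactly $-\langle \diver\sigma,Y\rangle$ for the matrix measure $\sigma=B_F(T_xV)\mu$ with row-wise divergence. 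So $\diver\sigma$ is a finite measure with $|\diver\sigma|(\R^d)=|\delta^FV|(\R^d)$, where $|\delta^FV|$ denotes the total variation of the vector-valued distribution, finite by assumption. The only care needed is to match the index conventions, transposing if necessary, which does not affect the hypotheses.

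\textbf{Conclusion.} Theorem~\ref{thm:support} now gives
\[
|V|(\R^d)=\mu(\R^d)\le C(d,\lambda,\Lambda)\HH^m(M)^{1/m}|\delta^FV|(\R^d),
\]
which is \eqref{eq:piersilvio} with $C(F)=C(d,\lambda,\Lambda)$. For \eqref{eq:gianfranco} we again use $\Theta_V\ge1$ to get $\HH^m(\{\Theta_V\ge1\})\le|V|(\R^d)$. This yields $|V|(\R^d)\le C|V|(\R^d)^{1/m}|\delta^FV|(\R^d)$. If $|V|(\R^d)=0$ there is nothing to prove. Otherwise we divide by $|V|(\R^d)^{1/m}$, which is finite and positive, and obtain $|V|(\R^d)^{(m-1)/m}\le C|\delta^FV|(\R^d)$.

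The only real content lies in Theorem~\ref{thm:support}, which we take as given. The step needing most attention is the verification that (AC1) yields a uniform $\lambda>0$ on the whole convex hull rather than only on the generators $B_F(P)$. This is immediate from compactness of $\mathcal K_F$, and it is precisely why (AC1) is phrased in terms of the convex hull. We do not need the measurability of $x\mapsto T_xV$ beyond the standard $|V|$-measurability of the approximate tangent map.
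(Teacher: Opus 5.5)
Your proposal is correct and is essentially the paper's own argument. You take $\sigma:=B_F(T_xV)\,|V|$; no transpose is needed, since with the row-wise convention $\diver\sigma=-\delta^FV$ holds directly, as your own computation shows. You then use compactness of $\mathcal K_F$ together with (AC1) to get $\lambda=\min_{\mathcal K_F}\sigma_m>0$, apply Theorem~\ref{thm:support} with $M=\{\Theta_V\ge1\}$, and absorb $\HH^m(M)\le|V|(\R^d)$ into the left-hand side to obtain~\eqref{eq:gianfranco}.
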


\begin{proof}
The rectifiable measure $\mathrm{d}\sigma(x):=B_F(T_xV)\dd|V|(x)$ satisfies $\diver \sigma=-\delta^FV$ and $B_F(T_xV)$ takes values in the compact convex set $\mathcal K_F$.
By (AC1), every matrix in $\mathcal K_F$ has rank at least $m$ and thus, since $\mathcal K_F$ is compact,
we have $\lambda=\min_{X\in\mathcal K_F}\sigma_m(X)>0$.

If $\Theta_V\ge1$ $|V|$-a.e., we may take $M:=\{\Theta_V\ge1\}$ and apply Theorem~\ref{thm:support}, which gives~\eqref{eq:piersilvio}. Noting that $\HH^m(M)\le |V|(\R^d)$ can be absorbed into the left-hand side, we also get~\eqref{eq:gianfranco}.
\end{proof}

In the setting of anisotropic integrands (see, for example,~\cite{D25} for a detailed account), the lack of a monotonicity formula makes the study of varifolds with bounded (anisotropic) first variation much more challenging. The validity of inequality~\eqref{eq:gianfranco} overcomes some of the issues. Indeed, assuming (AC), it allows one, for instance, to obtain a compactness theorem for rectifiable varifolds with bounded anisotropic first variation, and it is in fact equivalent to the latter~\cite[Proposition~1.9]{DP26}. We refer to the introduction of~\cite{DP26} for an equivalent functional version of the Michael--Simon inequality and for other useful consequences.

As already mentioned, recent progress on similar results to Theorem~\ref{ms} was made by the first and third named authors in~\cite{DP26} following unpublished work of Almgren, where such an inequality was obtained in the case $m=2$ and $d=3$ (hence codimension $1$) for integrands $F$ which are $\mathrm{C}^1$-close to the area functional. Subsequently, Firester and Tsiamis in~\cite{FT26b} extended this approach to $2$-varifolds in any ambient dimension $d$. The same authors, exploiting the resolution of the Vanishing Mass Conjecture by the second and fourth named authors in~\cite{GR26}, were able to extend inequality~\eqref{eq:gianfranco} to arbitrary dimension and codimension, retaining the assumption of $\mathrm{C}^1$-closeness to the area,  and to any convex anisotropy in codimension \(1\)~\cite[Theorem~1.1]{FT26a}. Such results have been more recently exploited to prove an anisotropic Allard regularity theorem in~\cite{DFT26}.  Theorem~\ref{ms} settles the validity of an anisotropic Michael--Simon inequality (and thus the compactness problem) in full generality, by proving it in arbitrary dimension and codimension without asking for closeness to the area functional, and under the natural (AC1) condition.

\subsection{Strategy of proof}\label{sec:strategy-proof}

For simplicity, we only discuss the strategy of proof of Theorem~\ref{thm:support},
which is strongly inspired by that of~\cite[Proposition~3.1]{GR26}; see also the AI disclosure statement. A tentative strategy to prove Theorem~\ref{thm:support} would be to restrict the measure $\sigma$ to pieces of $\mathrm{C}^1$ graphs and try to obtain good estimates on these pieces. Unfortunately, this is not a straightforward operation, as restricting the field $\sigma$ to Borel sets does not preserve the PDE constraint (i.e., the bound on the divergence) and leads to additional difficulties (much like in the original proof of the singular density theorem~\cite{DR16}, or~\cite{ADHR19}). Nevertheless, in this spirit, we restrict the ``weight'' $\mu$ while keeping the same ``polar'' $X$: namely, we introduce a better-controlled positive measure $\mu_0$ such that $\mu_0\leq\mu$, where $\mu$ is the one in the statement of Theorem~\ref{thm:support}; it is actually given by $\mu_0:=\mu\restr E$, where $E$ is an $m$-dimensional set with finite Minkowski content. The point is that we are still able to obtain useful bounds for such a $\mu_0$, even if $X\mu_0$ does not have finite divergence, exploiting the tame $m$-dimensional nature of $\mu_0$
and the fact that $\sigma=X\mu$ obeys a PDE constraint. Once this is done, a simple approximation argument allows us to infer a bound for the original measure $\mu$.

The crucial part of the argument is to obtain bounds on the \emph{entropy dissipation} for the ``heated'' $\mu_0$. One can indeed define for $t\in[0,T)$ the density
\[
  \rho_t:=\mathcal{P}_{T-t}\mu_0
\]
and estimate the rate of growth of its entropy (which increases in $t$ under the conventions used here). This rate is expressed in terms of the \emph{Fisher information} (which is the ``derivative'' of the entropy functional). Roughly speaking, the control on the divergence of $\sigma$ allows for an integral bound on the Fisher information
\[
  I(\rho_t):=-\int_{\R^d}\rho_t\tr\nabla^2\log\rho_t\dd x.
\]
The point is that, after using a trivial upper bound for $-\nabla^2\log\rho_t$ along the $d-m$ directions on which we have no control, we are able to get effective bounds along the $m$ tangent directions by exploiting the PDE.

Finally, we can reinterpret the previous integral bound as an upper bound for the total gap
\[
  \limsup_{t\to T^-}\tilde H(\rho_t)-\tilde H(\rho_0),
\]
where
\[
  \tilde H(\rho_t):=H(\rho_t)+\frac{(d-m)\mu_0(\R^d)}{2}\log(T-t)
\]
is a renormalized entropy adapted to the diffusion of $m$-dimensional measures. While for $\tilde H(\rho_0)$ we can use a trivial estimate, the final part of the argument involves obtaining a lower bound on $\limsup_{t\to T^-}\tilde H(\rho_t)$ (roughly speaking, the renormalized entropy of $\mu_0$) using the size of the support of $\mu_0$ (together with the Minkowski content estimate).


\subsection*{Acknowledgements.} The authors' research is funded by the European Research Council (ERC) and the UK Research and Innovation (UKRI) agency: GDP acknowledges support from CoG 101169953 ``RISE'', LG and FR acknowledge ERC/UKRI grant EP/Z000297/1 ``CONCENTRATE'', and AP acknowledges StG 101165368 ``MAGNETIC''.

\section{Notation}\label{sec:notation}
We write $|\nu|$ for the total variation measure associated with a (Borel) vector-valued measure $\nu$ on $\R^d$. Similarly, $|V|$ denotes the weight of a varifold $V$, i.e., the projection of $V$ onto the base space $\R^d$.
The divergence of a matrix-valued measure is taken row-wise, i.e.,
$(\diver\nu)_i:=\sum_{j=1}^d\partial_j\nu_{ij}$ as distributions.

For rectangular matrices of the same size, we use the Hilbert--Schmidt scalar product, namely
\[\langle A,B\rangle=A:B:=\tr(A^TB),\quad |A|^2=A:A.\]

For sets $E\subseteq\R^d$, we will write $B_r(E):=\{x:\dist(x,E)<r\}$ to denote the $r$-neighborhood.
Given a measure space $(X,\mu)$ and $p\in[1,\infty)$, let us recall that the weak-$L^p$ quasinorm of a measurable function $f:X\to\R$ is given by
\[\|f\|_{L^{p,\infty}(\mu)}
:=\sup_{u>0}u\mu(\{|f|>u\})^{1/p}.\]

We will often apply the heat flow semigroup to compactly supported measures $\nu$: letting
\[
p_s:\R^d\to(0,\infty),\quad p_s(x):=(4\pi s)^{-d/2}e^{-|x|^2/(4s)}
\]
denote the usual fundamental solution of the heat equation, for any $s>0$ we define
\[\PP_s\nu:=p_s*\nu.\]
Given a smooth $f:\R^d\to(0,\infty)$ with sufficiently fast decay at infinity (e.g., faster than $|x|^{-d-\varepsilon}$), we define its \emph{entropy} to be
\[H(f):=\int_{\R^d}f\log f\dd x.\]
Note that we use the convention making entropy increase along the \emph{backward} heat flow. We will compute entropies of functions which do not necessarily satisfy $\int_{\R^d}f\dd x=1$.

\section{Heat estimates}\label{sec:heat-estimates}

In this section we explain how to adapt the bounds from~\cite[Section~3]{GR26} for our purposes, while keeping the exposition self-contained. For simplicity, let us assume that $|X|\le\Lambda$ (up to enlarging $\Lambda$).
Moreover, we assume throughout the section that $\mu$ has compact support and $\mu(\R^d)=1$.
Let us set $\delta:=|\diver \sigma|(\R^d)$ and fix $T>0$. For $0\le t<T$, using the forward heat semigroup $(\PP_s)_{s>0}$, we define
\begin{equation}\label{eq:notation}
U_t:=\PP_{T-t}\mu,\quad V_t:=\PP_{T-t} \sigma,\quad R_t:=\PP_{T-t}|\diver \sigma|,\quad
S_t:=U_t+\sqrt{T-t}\,R_t,
\end{equation}
as well as
\begin{equation}
B_t:=\frac{V_t}{S_t},\quad \zeta_t:=\nabla\log S_t=\frac{\nabla S_t}{S_t},\quad
\eta_t:=\frac{\PP_{T-t}\diver \sigma}{S_t},\quad
\kappa_t:=\frac{R_t}{2\sqrt{T-t}\,S_t}.
\end{equation}
We will often drop the subscript $t$.

In particular, we have
\begin{equation}\label{eq:source}
\HH S=-\kappa S,\quad \HH V=0,\quad
\int_{\R^d} S_t\dd x=1+\sqrt{T-t}\,\delta,
\end{equation}
where $\HH:=\partial_t+\Delta$, while by convexity
\begin{equation}\label{cvx}
\frac{V}{U}\in\mathcal K,\quad |B|\le \Lambda.
\end{equation}


For a (nonzero) submeasure $0\le\mu_0\le\mu$, set
\[
\rho_t:=\PP_{T-t}\mu_0,\quad \gamma:=\mu_0(\R^d),
\]
and note that $0<\gamma\le1$, as well as $0\le\rho\le U\le S$.

The following lemma replaces~\cite[Lemma~3.2]{GR26}; compared to it,
besides taking $a\equiv 1$, we use $\rho$ in place of $S$. Moreover,
as opposed to the argument in~\cite[Section~3]{GR26}, where $\rho$ gets dissipated
on the region where $q(B)\ge-\varepsilon$, here $\rho$ simply evolves through the backward heat semigroup.

\begin{lemma}\label{lem:energy}
There exists a constant $C(d,\Lambda)$ such that
\begin{equation}\label{eq:selected}
\int_0^T\int_{\R^d}\rho
\bigl(|\nabla B|^2+|B\nabla\log\rho|^2\bigr) \dd x \dd t
\le C\gamma+C\gamma\log\biggl(\frac{1+\sqrt{T}\,\delta}{\gamma}\biggr).
\end{equation}
\end{lemma}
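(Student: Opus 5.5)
The plan is to combine two monotonicity identities along the flow: a relative-entropy identity for $\rho$ against $S$, which produces the logarithmic term, and an energy identity for $\int\rho|B|^2$, which produces $\int\rho|\nabla B|^2$. Throughout, $\rho$ solves the backward heat equation $\HH\rho=0$, since $\partial_t\rho=-\Delta\rho$. First I will derive the equation for $B$. From $\HH V=0$, $\HH S=-\kappa S$ and $V=BS$, a direct computation gives
\[
\partial_t B=-\Delta B-2\,\zeta\cdot\nabla B+\kappa B .
\]
From $\diver V=\PP_{T-t}\diver\sigma=\eta S$ we get the structural identity
\[
B\zeta=\eta-\diver B .
\]
Finally, $|\eta|\le R/S$ and $R/S\le (T-t)^{-1/2}$, so
\[
\eta^2\le \frac{R}{\sqrt{T-t}\,S}=2\kappa .
\]

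\textbf{Step 1 (relative entropy).} Set $G(t):=\int\rho\log(\rho/S)\dd x$. Using $\partial_t\log S=-\Delta\log S-|\zeta|^2-\kappa$ and integrating by parts, I expect
\[
G'(t)=\int\rho\,|\nabla\log(\rho/S)|^2\dd x+\int\rho\,\kappa\dd x\ \ge 0 .
\]
Since $0\le\rho\le S$, we have $G\le 0$. By Jensen's inequality, $\int\rho=\gamma$ and $\int S\le 1+\sqrt T\,\delta$, we have $G(0)\ge -\gamma\log\bigl((1+\sqrt T\delta)/\gamma\bigr)$. Integrating on $[0,t]$ and letting $t\to T$ then gives
\[
\int_0^T\!\!\int\rho\bigl(|\nabla\log(\rho/S)|^2+\kappa\bigr)\le \gamma\log\Bigl(\frac{1+\sqrt T\,\delta}{\gamma}\Bigr).
\]
The integrations by parts are justified because $\mu$ has compact support, so all quantities have Gaussian tails.

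\textbf{Step 2 (energy identity).} Using the equation for $B$ and $\partial_t\rho=-\Delta\rho$, and integrating by parts twice, I expect
\[
\frac{d}{dt}\int\rho|B|^2=2\int\rho|\nabla B|^2+4\int\rho\,\nabla\log(\rho/S)\cdot (B\nabla B)+2\int\rho\kappa|B|^2 .
\]
Here $\nabla\log(\rho/S)\cdot(B\nabla B)$ denotes the scalar $\sum_{i,j}\partial_j\log(\rho/S)\,\sum_k B_{kj}\partial_i B_{ki}$-type contraction produced by the integration by parts; I will fix the exact index contraction when doing the computation. I absorb the cross term by Young's inequality into $\int\rho|\nabla B|^2+4\Lambda^2\int\rho|\nabla\log(\rho/S)|^2$. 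Using $|B|\le\Lambda$ from \eqref{cvx}, the boundary terms are at most $\Lambda^2\gamma$. This yields
\[
\int_0^T\!\!\int\rho|\nabla B|^2\le \Lambda^2\gamma+4\Lambda^2\int_0^T\!\!\int\rho\,|\nabla\log(\rho/S)|^2 ,
\]
which Step~1 bounds by the right-hand side of \eqref{eq:selected}.

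\textbf{Step 3 (the term $|B\nabla\log\rho|^2$).} I write $B\nabla\log\rho=B\nabla\log(\rho/S)+B\zeta=B\nabla\log(\rho/S)+\eta-\diver B$. This gives the pointwise bound
\[
|B\nabla\log\rho|^2\le C\bigl(\Lambda^2|\nabla\log(\rho/S)|^2+\kappa+|\nabla B|^2\bigr),
\]
where I used $\eta^2\le2\kappa$. All three terms are integrable against $\rho$ by Steps 1 and 2, and this concludes the proof. The main obstacle I anticipate is Step 1, specifically checking the sign of the $\kappa$ term. A priori $\int_0^T\!\int\rho\kappa$ looks uncontrollable, since $\kappa$ can be as large as $1/(2(T-t))$. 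The argument relies on $\kappa$ entering $G'$ with a favorable sign, so that it is paid for by the same entropy budget, and I will verify this carefully in the computation.
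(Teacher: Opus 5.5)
Your proposal is correct and is essentially the paper's proof: it uses the same relative entropy $J(t)=\int\rho\log(\rho/S)$ with the Jensen bounds, the same energy identity for $\int\rho|B|^2$ absorbed by Young's inequality, and the same structural identity $\diver B+B\zeta=\eta$. In the cross term the correct contraction is $2\int\rho\,\nabla\log(\rho/S)\cdot\nabla|B|^2$, and its bound by $4\Lambda\int\rho|\nabla\log(\rho/S)||\nabla B|$ is all you need. The one genuine difference is how you control $\int_0^T\int\rho|\eta|^2$: you note $|\eta|^2\le (R/S)^2\le 2\kappa$ and reuse the term $\int\rho\kappa$ in $J'$, which the paper discards. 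The paper instead splits the time integral at $T-t=\gamma^2/\delta^2$. Your route is slightly shorter and gives the same logarithmic bound.
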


\begin{proof}
For any smooth $\varphi$ we first compute that
\begin{equation}\label{eq:product}
\HH(S\varphi)=S\HH_\zeta\varphi-\kappa S\varphi,
\end{equation}
where $\HH_\zeta:=\HH+2\zeta\cdot\nabla$, so that $\HH_\zeta B=\kappa B$.

Letting
\[
  J(t):=\int_{\R^d}\rho_t\log(\rho_t/S_t)\dd x,
\]
using Jensen's inequality for $s\mapsto s\log s$, and recalling that $\rho\le S$,
we get
\begin{equation}\label{jensen}
\gamma\log\biggl(\frac \gamma{1+\sqrt{T-t}\,\delta}\biggr)\le J(t)\le0.
\end{equation}
Recalling that $\HH\rho=0$ and using~\eqref{eq:product} with $\varphi=q:=\rho/S$ we get again $\HH_\zeta q=\kappa q$.
Hence, by the chain rule, $\Phi(q):=q\log q$ is such that
\[\HH_\zeta\Phi(q)=\Phi'(q)\HH_\zeta q+\Phi''(q)|\nabla q|^2
=(1+\log q)\kappa q+\frac{|\nabla q|^2}{q},\]
so that
\[
  \HH(\rho\log(\rho/S))=\HH(S\Phi(q))=S\HH_\zeta\Phi(q)-\kappa S\Phi(q)=\kappa\rho+\rho|\nabla\log q|^2
\]
and thus
\begin{equation}\label{eq:relative}
J'(t)=\int_{\R^d}\rho_t\biggl|\nabla\log\frac{\rho_t}{S_t}\biggr|^2 \dd x
+\int_{\R^d}\kappa_t\rho_t \dd x.
\end{equation}
It follows that
\begin{equation}\label{pluto}
\int_0^T\int_{\R^d}\rho\biggl|\nabla\log\frac\rho S\biggr|^2 \dd x \dd t
\le -J(0) \leq \gamma\log\biggl(\frac{1+\sqrt T\,\delta}{\gamma}\biggr).
\end{equation}
We now compute that
\[\HH(\rho|B|^2)=2\rho|\nabla B|^2+2\langle\nabla\rho-\rho\zeta,\nabla|B|^2\rangle+2\kappa\rho|B|^2.\]
Integrating in space and using $|\nabla|B|^2|\le2\Lambda|\nabla B|$
(recall that $|B|\le\Lambda$), we obtain
\[\frac{\mathrm{d}}{\mathrm{d}t}\int_{\R^d}\rho_t|B_t|^2 \dd x\ge\int_{\R^d}2\rho_t|\nabla B_t|^2 \dd x-\int_{\R^d}4\Lambda\rho_t|\nabla\log(\rho_t/S_t)||\nabla B_t| \dd x
.\]
By Young's inequality, since $\int_{\R^d}\rho_t|B_t|^2 \dd x\le\Lambda^2\int_{\R^d}\rho_t \dd x=\Lambda^2\gamma$ we deduce that
\[\int_0^T\int_{\R^d}\rho|\nabla B|^2 \dd x \dd t\le\Lambda^2\gamma+C\int_0^T\int_{\R^d}\rho|\nabla\log(\rho/S)|^2 \dd x \dd t,\]
which gives
\begin{equation}\label{nabla.B.est}
    \int_0^T\int_{\R^d}\rho|\nabla B|^2 \dd x \dd t\le C\gamma+C\gamma\log\biggl(\frac{1+\sqrt{T}\,\delta}{\gamma}\biggr)
\end{equation}
thanks to~\eqref{pluto}.

Next, by construction we have $|\eta_t|\le\frac{1}{\sqrt{T-t}}$,
so that
\[\int_{\R^d}\rho_t|\eta_t|^2 \dd x\le\frac{1}{T-t}\int_{\R^d}\rho_t \dd x=\frac{\gamma}{T-t}\]
and
\[
\int_{\R^d} \rho_t|\eta_t|^2 \dd x\le\int_{\R^d}S_t|\eta_t|^2 \dd x
\le\frac{1}{\sqrt{T-t}}\int_{\R^d}|\PP_{T-t}\diver \sigma| \dd x\le\frac{\delta}{\sqrt{T-t}}.
\]
If $\sqrt{T}>\gamma/\delta$ then, letting $s:=T-t$, we can estimate
\[\int_0^T\int_{\R^d}\rho|\eta|^2 \dd x \dd s
\le\int_0^{\gamma^2/\delta^2}\frac{\delta}{\sqrt{s}} \dd s
+\int_{\gamma^2/\delta^2}^T\frac{\gamma}{s}\dd s
=2\gamma+2\gamma\log\biggl(\frac{\sqrt{T}\,\delta}{\gamma}\biggr),\]
while if $\sqrt{T}\le\gamma/\delta$ then the second bound gives directly that
\[
  \int_0^T\int_{\R^d}\rho|\eta|^2\dd x\dd s\le2\sqrt{T}\,\delta,
\]
so that in both cases we have
\begin{equation}\label{eta.est}
    \int_0^T\int_{\R^d}\rho|\eta|^2 \dd x \dd t
    \le 4\gamma\log\biggl(1+\frac{\sqrt{T}\,\delta}{\gamma}\biggr).
\end{equation}
Also,~\cite[eq.~(3.9)]{GR26} is unchanged and reads
$\diver B+B\zeta=\eta$. Together with~\eqref{nabla.B.est} and~\eqref{eta.est}, this gives
\begin{equation}\label{eq:fullenergy}
\int_0^T\int_{\R^d} \rho|B\zeta|^2 \dd x \dd t
\le C\gamma+C\gamma\log\biggl(\frac{1+\sqrt{T}\,\delta}{\gamma}\biggr).
\end{equation}
Since $|B|\le \Lambda$ and
$B\nabla\log\rho=B\zeta+B\nabla\log(\rho/S)$, combining~\eqref{eq:fullenergy} with~\eqref{pluto} we also get the desired integral bound for $\rho|B\nabla\log\rho|^2$.
\end{proof}

We now apply the previous bounds in order to obtain an upper bound
on (a renormalized version of) the Fisher information, which in turn
bounds the entropy growth along the evolution of $\rho_t$.

\begin{lemma}\label{lem:dimensional}
For every $0<\tau<T$, writing
$I(\rho_t):=\int_{\R^d}\rho_t|\nabla\log\rho_t|^2\dd x$, we have
\begin{equation}\label{eq:dimensional}
\int_0^\tau\biggl[I(\rho_t)-\frac{(d-m)\gamma}{2(T-t)}\biggr] \dd t
\le C\gamma+C\gamma\log\biggl(\frac{1+\sqrt{T}\,\delta}{\gamma}\biggr)
\end{equation}
for some constant $C(d,\lambda,\Lambda)$.
\end{lemma}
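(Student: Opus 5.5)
The plan is to write $s:=T-t$ and split $|\nabla\log\rho|^2$ into a "tangential" part, controlled via $|B\nabla\log\rho|^2$ from Lemma~\ref{lem:energy}, and a "normal" part of dimension at most $d-m$. The normal part will be bounded by the Hessian lower bound coming from the heat kernel. Since $\rho_t=p_s*\mu_0$, the standard covariance identity gives
\[
\nabla^2\log\rho_t=-\tfrac{1}{2s}\mathrm{Id}+\tfrac{1}{4s^2}\mathrm{Cov}\ \ge\ -\tfrac{1}{2s}\mathrm{Id}.
\]
So for any smooth symmetric field $0\le Q\le\mathrm{Id}$ we have $-Q:\nabla^2\log\rho\le \tr Q/(2s)$. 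Integrating by parts (which is legitimate since $\mu_0$ has compact support and everything decays like Gaussians) yields
\[
\int\rho\,\langle Q\nabla\log\rho,\nabla\log\rho\rangle\dd x=-\int\rho\,Q:\nabla^2\log\rho\dd x-\int\rho\,\nabla\log\rho\cdot\diver Q\dd x .
\]

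The choice of $Q$ is as follows. Set $G:=B^TB$, a smooth symmetric field with $0\le G\le\Lambda^2\mathrm{Id}$. Take $Q:=\varphi(G)$, where $\varphi:[0,\infty)\to[0,1]$ is smooth, $\varphi\equiv1$ on $[0,\lambda^2/16]$ and $\varphi\equiv0$ on $[\lambda^2/8,\infty)$. Then $\mathrm{Id}-Q\le C(\lambda)G$, so
\[
\int\rho\langle(\mathrm{Id}-Q)\nabla\log\rho,\nabla\log\rho\rangle\le C\int\rho|B\nabla\log\rho|^2 .
\]
Also $|\nabla Q|\le C(\lambda,\Lambda)|\nabla B|$, which makes the divergence term harmless: by Cauchy--Schwarz it is bounded by $\tfrac12\int\rho|\nabla\log\rho|^2$-type quantities plus $C\int\rho|\nabla B|^2$. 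To keep it clean I would instead write $|\nabla\log\rho|^2\le 2|Q\nabla\log\rho|^2+\dots$ so that it is absorbed, and the remainder is covered by Lemma~\ref{lem:energy}.

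The key pointwise fact concerns $\tr Q$. By \eqref{cvx}, $B=(U/S)Y$ with $Y\in\mathcal K$, hence $\sigma_m(Y)\ge\lambda$. On $\{U/S\ge1/2\}$, $G$ has at least $m$ eigenvalues $\ge\lambda^2/4$, so $\tr Q\le d-m$ there. On $\{U/S<1/2\}$ we only have $\tr Q\le d$. This leaves the error
\[
\frac{m}{2s}\int\rho\,\mathbf 1_{\{U/S<1/2\}}\le\frac{m}{s}\int\rho\Bigl(1-\frac US\Bigr)=\frac{m}{s}\int\rho\frac{\sqrt s\,R}{S}=2m\int\kappa\rho .
\]
By \eqref{eq:relative}, $\int_0^T\!\int\kappa\rho\le -J(0)$, which by \eqref{jensen} is at most $\gamma\log\bigl((1+\sqrt T\delta)/\gamma\bigr)$.

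Summing up, I expect
\[
I(\rho_t)\le\frac{(d-m)\gamma}{2s}+C\int\rho\bigl(|B\nabla\log\rho|^2+|\nabla B|^2+\kappa\bigr)\dd x .
\]
Integrating over $(0,\tau)$ and invoking Lemma~\ref{lem:energy} and \eqref{pluto} then gives \eqref{eq:dimensional}. The main obstacle is the absorption step: the divergence term $\int\rho\nabla\log\rho\cdot\diver Q$ must be controlled without losing the sharp constant $(d-m)/2$. The fix is to apply Young's inequality with a small parameter in the form $\int\rho|\nabla\log\rho||\nabla B|\le\varepsilon\int\rho\langle Q\nabla\log\rho,\nabla\log\rho\rangle+\dots$. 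This only affects the left side multiplicatively by $(1-\varepsilon)$, which is too crude. So I would instead place $Q^{1/2}$, a smooth function of $G$, on both factors. Only $\rho|Q\nabla\log\rho|\,|\nabla B|$ and $\rho|(\mathrm{Id}-Q)\nabla\log\rho|\,|\nabla B|$ then appear, and these are bounded by $\varepsilon\cdot$(the quantity being estimated, which is itself at most $\tfrac{d\gamma}{2s}$ plus the controlled terms) plus $C_\varepsilon\int\rho|\nabla B|^2$. If that loses a constant, a fallback is to note that $\rho|\nabla\log\rho|^2$ integrated against $1/s$-weights is at worst $O(\gamma/s)$ and to split time as in \eqref{eta.est}.
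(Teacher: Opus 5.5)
There is a genuine gap, and it sits exactly at the step you flag as the main obstacle. The cross term $\int\rho\,\nabla\log\rho\cdot\diver Q$ is never controlled. It pairs $|\nabla Q|\lesssim|\nabla B|$, which Lemma~\ref{lem:energy} bounds only in $L^2(\rho\dd x\dd t)$, with the \emph{full} gradient $\nabla\log\rho$. The normal component of that gradient has $\int\rho|Q\nabla\log\rho|^2$ of size $\frac{(d-m)\gamma}{2(T-t)}$ (exactly so for a flat $\mu_0$), which is not integrable up to $t=T$.

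Your proposed fix does not remove the difficulty. After splitting $\nabla\log\rho=Q\nabla\log\rho+(\mathrm{Id}-Q)\nabla\log\rho$, the term $\rho|Q\nabla\log\rho||\nabla B|$ still has to be handled by Young's inequality. That either costs $\varepsilon\int_0^\tau\frac{(d-m)\gamma}{2(T-t)}\dd t$, or equivalently replaces $(d-m)/2$ by $(d-m)/(2(1-\varepsilon))$. Cauchy--Schwarz in time instead gives growth like $\gamma\sqrt{\log(T/(T-\tau))}$. In every case the bound fails to be uniform in $\tau$. The exact constant $(d-m)/2$ is indispensable, because the lower bound \eqref{lb} compensates exactly that power of $T-t$ and no more. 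The time-splitting fallback fails as well: \eqref{eta.est} relied on the second bound $\int\rho|\eta|^2\le\delta/\sqrt{T-t}$, which is integrable at $t=T$, and no analogue exists for the cross term.

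The missing idea is a second integration by parts that makes the full gradient disappear. The paper first integrates by parts the whole Fisher information, $I(\rho)=-\int\rho\,\tr\nabla^2\log\rho$; this produces no error term because $\mathrm{Id}$ is constant. It then splits the \emph{Hessian} rather than the quadratic form in $\nabla\log\rho$:
\begin{itemize}
\item On the normal part it applies \eqref{eq:hessian} pointwise.
\item On the tangential part $-\int\rho\,\Pi:\nabla^2\log\rho$, with $\Pi=Q^2$ and $Q$ a smooth symmetric function of $B^TB$ vanishing near $0$, it uses \eqref{eq:ibp} to rewrite it as $\sum_i\int\rho\,[(\diver e_i+e_i\cdot\nabla\log\rho)^2-\tr((De_i)^2)]$, where $e_i$ are the columns of $Q$.
\end{itemize}

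Your setup can be repaired the same way. Write $\mathrm{Id}-Q=P^2$ with $P=\psi(B^TB)$ smooth and $\psi^2=1-\varphi$, and let $e_i$ be the columns of $P$. The dangerous part of $\nabla\log\rho\cdot\diver(P^2)$ is $\sum_i((e_i\cdot\nabla)e_i)\cdot\nabla\log\rho$. Integrating it by parts once more gives $\int\rho\,[(\diver e_i)^2+(\diver e_i)(e_i\cdot\nabla\log\rho)-\tr((De_i)^2)]$. After this, only $|P\nabla\log\rho|\lesssim|B\nabla\log\rho|$ and $|\nabla P|\lesssim|\nabla B|$ remain, and Lemma~\ref{lem:energy} controls both.

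The rest of your argument is sound, including the normal-direction trace bound. Bounding the bad-set error by $2m\int\kappa\rho$ and then using \eqref{eq:relative} is a valid alternative to the paper's $\min\{\gamma,\sqrt{T-t}\,\delta\}$ estimate. Without the second integration by parts, however, the proposal does not establish \eqref{eq:dimensional}.
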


\begin{proof}
Choose a smooth $\chi:[0,\infty)\to[0,1]$, equal to zero on
$[0,\lambda^2/16]$ and to one on $[\lambda^2/4,\infty)$. Let
$Q:=\chi(B^TB)$ be defined by spectral calculus as
\[
  Q:=O\diag(\chi(\sigma_1^2(B)),\dots,\chi(\sigma_d^2(B)))O^T,
\]
where we write $B^TB=O\diag(\sigma_1^2(B),\dots,\sigma_d^2(B))O^T$ using the spectral theorem. Moreover, set $\Pi:=Q^2$, which plays the role of a projection matrix. We have
\begin{equation}\label{eq:spectral}
0\le\Pi\le I_d,\quad |Qz|^2\le C|Bz|^2,\quad
|\nabla Q|\le C|\nabla B|.
\end{equation}
On $\{U_t\ge\sqrt{T-t}\,R_t\}$,~\eqref{eq:notation} and~\eqref{cvx} imply
$\sigma_m(B_t)\ge \lambda/2$. To see this, observe that $S_t\leq 2U_t$ gives $U_t/S_t\geq1/2$, and thanks to~\eqref{cvx} we can write
\[
  \sigma_m(B_t)=\frac{U_t}{S_t}\sigma_m(V_t/U_t)\geq\lambda/2.
\]
Hence, $\tr\Pi_t\ge m$ by construction of $\chi$. The complementary
set $E_t$ has
\[
  \int_{E_t}\rho_t\dd x\le\int_{E_t} U_t\dd x\le\int_{E_t}\sqrt{T-t}\,R_t\dd x\le\sqrt{T-t}\,\delta.
\]
Therefore,
\begin{equation}\label{eq:trace}
\int_{\R^d}\rho_t\tr(I_d-\Pi_t)\dd x\le(d-m)\gamma+d\min\{\gamma,\sqrt{T-t}\,\delta\}.
\end{equation}

Differentiating the convolution with the Gaussian $p_s$ (where $s:=T-t$)
and noting that
\[
  \nabla^2 p_s(x-y)=\biggl[\frac{(x-y)\otimes (x-y)}{4s^2}-\frac{I_d}{2s}\biggr]p_s(x-y),
\]
we get
\begin{equation}\label{eq:hessian}
\nabla^2\log\rho_t\ge-\frac{I_d}{2(T-t)}.
\end{equation}
This is the well-known log semi-convexity improvement of the heat flow, and can be proved by observing that the difference of the two terms equals the covariance of $y$ under the probability
measure $p_s(x-y)\dd\mu_0(y)/\rho_t(x)$, divided by $4s^2$. Using $I(\rho_t)=-\int_{\R^d}\rho_t\tr(\nabla^2\log\rho_t)\dd x$,
\eqref{eq:trace}, and~\eqref{eq:hessian}, we furthermore obtain
\begin{align*}
I(\rho_t)&=-\int_{\R^d}\rho_t\tr((I_d-\Pi_t)\nabla^2\log\rho_t) \dd x-\int_{\R^d}\rho_t\tr(\Pi_t\nabla^2\log\rho_t) \dd x\\
&\le\frac{(d-m)\gamma}{2(T-t)}
+\min\biggl[\frac{d\gamma}{2(T-t)},\frac{d\delta}{2\sqrt{T-t}}\biggr]
-\sum_i\int_{\R^d}\rho_t [e_i^T(\nabla^2\log\rho_t)e_i] \dd x,
\end{align*}
where $e_i=e_{i,t}$ denotes the $i$-th column of $Q_t$.

Now, for every smooth vector field $e$ with compact support, integration
by parts gives
\begin{equation}\label{eq:ibp}
-\int_{\R^d}\rho_t [e^T(\nabla^2\log\rho_t)e] \dd x
=\int_{\R^d}\rho_t [
(\diver e+e\cdot\nabla\log\rho_t)^2
-\tr((De)^2)] \dd x,
\end{equation}
where $(De)_{jk}:=\partial_ke_j$.
Indeed, writing $b_t:=\diver e+e\cdot\nabla\log\rho_t=\rho^{-1}_t\diver(\rho_t e)$, we have
$\int_{\R^d}\rho_t b_t^2 \dd x=-\int_{\R^d}\rho_t e\cdot\nabla b_t \dd x$ and
\[e\cdot\nabla b_t=e\cdot\nabla \diver e+((e\cdot\nabla)e)\cdot\nabla\log\rho_t+e^T(\nabla^2\log\rho_t)e,\]
and, once we integrate, the first two terms on the right-hand side combine to give the trace term $-\int_{\R^d}\rho_t\tr((De)^2)\dd x$. By a standard cut-off argument,
\eqref{eq:ibp} also applies to the columns $e_{i,t}$ of $Q_t$.
Summing over $i$ and recalling~\eqref{eq:spectral}, this gives
\begin{equation}\label{eq:fisher}
I(\rho_t)\le\frac{(d-m)\gamma}{2(T-t)}+\min\biggl[\frac{d\gamma}{2(T-t)},\frac{d\delta}{2\sqrt{T-t}}\biggr]
+C\int_{\R^d}\rho_t(|\nabla B_t|^2+
|B_t\nabla\log\rho_t|^2) \dd x.
\end{equation}
Integrating in $t$, applying Lemma~\ref{lem:energy},
and bounding the integral of the second term as in the previous proof, we reach the claim.
\end{proof}

\section{Proofs of the main theorems}\label{sec:proofs-main-theorems}

\begin{proof}[Proof of Theorem~\ref{thm:support}]
Assume again that $\mu$ has compact support, $\mu(\R^d)=1$, $|\diver \sigma|(\R^d)=\delta$, and $0\le\mu_0\le\mu$ with $\gamma:=\mu_0(\R^d)\in(0,1]$;
the submeasure $\mu_0$ will be chosen in a moment.

The idea now is to look at the entropy $H(\rho_t)$ as time passes.
More specifically, we look at a renormalized version of it, adapted to codimension $d-m$:
let
\[\tilde H(\rho_t):=H(\rho_t)+\frac{(d-m)\gamma}{2}\log(T-t)=\int_{\R^d}\rho_t\log\rho_t \dd x+\frac{(d-m)\gamma}{2}\log(T-t).\]
Since $\HH\rho=0$, the derivative $\frac{\mathrm{d}}{\mathrm{d}t}H(\rho_t)=I(\rho_t)\ge0$ is the Fisher information (recall that
$\rho_t$ evolves through the backward heat equation), and thus
\[\frac{\mathrm{d}}{\mathrm{d}t}\tilde H(\rho_t)=I(\rho_t)-\frac{(d-m)\gamma}{2(T-t)}.\]
The previous lemma then gives
\[\limsup_{t\to T^-}\tilde H(\rho_t)-\tilde H(\rho_0)\le C\gamma+C\gamma\log\biggl(\frac{1+\sqrt T\,\delta}{\gamma}\biggr).\]
Moreover, since $\rho_0=\PP_T\mu_0$, we have $\rho_0\le\frac{\gamma}{(4\pi T)^{d/2}}$, and hence
\[H(\rho_0)\le\int_{\R^d}\rho_0\log\biggl(\frac{\gamma}{(4\pi T)^{d/2}}\biggr)\dd x\le\gamma\log\gamma-\frac{d\gamma}{2}\log(4\pi T),\]
giving in particular
\[\tilde H(\rho_0)\le-\frac{m\gamma}{2}\log T.\]
Thus, for some constant $C(d,\lambda,\Lambda)$, we get
\begin{equation}\label{u.b.for.h}
\limsup_{t\to T^-}\tilde H(\rho_t)\le -\frac{m\gamma}{2}\log T+C\gamma+C\gamma\log\biggl(\frac{1+\sqrt{T}\,\delta}{\gamma}\biggr).
\end{equation}

Next, we observe that $\mu$ is a rectifiable measure, by~\cite[Proposition~3.1]{ADHR19}.
We can then take $E\subseteq M$ to be a finite disjoint union of compact sets, each of which is included in a $1$-Lipschitz graph (up to a rotation), and $\mu_0:=\mu\restr E$.
We claim a lower bound on the left-hand side, namely
\begin{equation}\label{lb}
\liminf_{t\to T^-}\tilde H(\rho_t)\ge\gamma\log\biggl(\frac{\gamma}{\HH^m(E)}\biggr)-C(d)\gamma.
\end{equation}
Note that the geometry of $E$ is not controlled, namely $E$ could start resembling an $m$-plane only
at very small scales, but the key point is that in the limit $t\to T$ only the picture at smaller and smaller scales matters. More precisely, we will just need a bound on the Minkowski content at (non-uniformly) small scales.

To establish~\eqref{lb}, we first compare the probability density $\varphi_t:=\rho_t/\gamma$
with a more explicit one, namely
\[
 \psi_s:= \gamma_s^{-1}
 e^{-\dist(x,E)^2/(4s)},\quad\gamma_s:=\int_{\R^d}e^{-\dist(x,E)^2/(4s)}\dd x,
\]
where we set $s:=T-t$ for simplicity. Since the relative entropy $\int_{\R^d}\varphi_t(\log\varphi_t-\log\psi_t)\dd x\ge0$,
after a little rearrangement we get
\[H(\rho_t)\ge\gamma(\log\gamma-\log\gamma_s)-\frac{1}{4s}\int_{\R^d}\dist(x,E)^2 \rho_t(x)\dd x.\]
Since $\dist(x,E)\le|x-y|$ for all $y\in E$ and $\rho_t=\PP_s\mu_0$, the last integral is bounded by
\[
  \int_E\int_{\R^d}|x-y|^2p_s(x-y)\dd x\dd\mu_0(y)=2d\gamma s,
\]
so that
\[
  H(\rho_t)\ge\gamma(\log\gamma-\log\gamma_s)-\frac{d\gamma}{2}.
\]
Finally, by construction of $E$, it is straightforward to check that there exists $r_0>0$ such that $|B_r(E)|\le C(d)r^{d-m}\HH^m(E)$ for all radii $0<r<r_0$,
giving
\begin{align*}\gamma_s&=\int_0^\infty\frac{r}{2s}e^{-r^2/(4s)}|B_r(E)|\dd r\\
&\le C(d)\int_0^{r_0}\frac{r^{1+d-m}}{2s}e^{-r^2/(4s)}\HH^m(E)\dd r+C(E)e^{-r_0^2/(8s)}\\
&\le C(d)s^{(d-m)/2}\HH^m(E)\end{align*}
for $s$ small enough, proving~\eqref{lb}.

Combining~\eqref{u.b.for.h} with~\eqref{lb},
we arrive at
\begin{equation}\label{final.bd}
\log\biggl(\frac{\gamma}{\HH^m(E)}\biggr)\le -\frac{m}{2}\log T+C+C\log\biggl(\frac{1+\sqrt{T}\,\delta}{\gamma}\biggr).
\end{equation}
This implies that $\delta>0$, since otherwise we would reach a contradiction for $T$ large enough. Noting that we can take $E$ as above such that $\gamma=\mu_0(\R^d)=\mu(E)\in[1/2,1]$,
we can then take $T:=\delta^{-2}$, which yields
\[\log\HH^m(E)+\log\delta^m\ge-C\]
for some constant $C(d,\lambda,\Lambda)$,
and hence
\[\mu(\R^d)=1\le C\HH^m(E)^{1/m}\delta=C\HH^m(E)^{1/m}|\diver \sigma|(\R^d),\]
as desired.

By homogeneity, this proves the theorem for a compactly supported $\mu$,
while the general case follows by a standard cut-off argument.
\end{proof}

\begin{proof}[Proof of Theorem~\ref{Lp}]
Without loss of generality, we can assume that $\A=\diver$, as in the statement of Theorem~\ref{thm:support}.
Moreover, as in the previous proof, we can assume that $\mu$ is compactly supported and $\mu(\R^d)=1$ (by homogeneity).

We take again $\mu_0:=\mu\restr E$
with $E\subseteq M$ a finite disjoint union of compact pieces of $1$-Lipschitz graphs.
We observe that, taking $T:=r^2$, \eqref{final.bd} rearranges to
\[0\le\log\biggl(\frac{\HH^m(E)}{r^m}\biggr)+C'+C'\log\biggl(\frac{1+r\delta}{\gamma}\biggr)\]
for some $C'(d,\lambda,\Lambda)$; since $\gamma=\mu(E)$, taking $\varepsilon:=1/C'$ this is the same as
\[\frac{\mu(E)}{1+r\delta}\le C\biggl(\frac{\HH^m(E)}{r^m}\biggr)^\varepsilon.\]
By approximation, this holds for any $E\subseteq M$ with $\HH^m(E)\in(0,\infty)$ (assuming without loss of generality that $M$ is $m$-rectifiable).
Given $u>0$, taking $E_u:=\{x\in M:\theta(x)>u\}$ this yields
\[u\HH^m(E_u)
\le\int_{E_u}\theta\dd\HH^m
\le\mu(E_u)
\le C\frac{u^{-\varepsilon}}{r^{m\varepsilon}}(1+r\delta),\]
where we used the fact that $\HH^m(E_u)\le u^{-1}\mu(E_u)\le u^{-1}$.
Taking $p:=1+\varepsilon$, this is the same as
\[u^p\HH^m(\{x\in M:\theta(x)>u\})\le \frac{C}{r^{m(p-1)}}(1+r\delta)\]
for any given $u>0$, as desired.
\end{proof}

\end{document}